\definecolor{dred}{RGB}{220,0,0}
\newcommand{\puff}[1]{}
\newcommand{\ie}{i.e., }
\newcommand{\eg}{e.g., }
\newcommand{\ed}{\,{\buildrel d \over =}\,}
\newcommand{\E}{\mathbb E}
\newcommand{\ind}[1]{\mathds{1}_{\{ #1\}}}
\newcommand{\N}{\mathbb N}
\newcommand{\R}{\mathbb R}
\newcommand{\e}{\varepsilon}
\renewcommand{\o}{\omega}
\newcommand{\G}{\Gamma}
\newcommand{\GC}{\Gamma(\nc)}
\newcommand{\cD}{\mathcal{D}_{\nc}}
\newcommand{\cS}{\mathcal{S}}
\newcommand{\cX}{\mathcal{X}}
\newcommand{\nc}{C}
\newcommand{\ac}{\mathcal{A}(C)}
\newcommand{\D}{\Delta}
\newcommand{\siv}{\Upsilon_i(\nc)}
\newcommand{\si}{\Upsilon(\nc)}
\newcommand{\ninf}{\nu \to \infty}
\newcommand{\tm}{t_{\mathrm{mix}}}
\newcommand{\limnu}{\lim_{\ninf}}
\theoremstyle{acmplain}
\newtheorem{theorem}{Theorem}[section]
\newtheorem{proposition}[theorem]{Proposition}
\newtheorem{lemma}[theorem]{Lemma}
\theoremstyle{acmdefinition}
\begin{document}
\title{Temporal starvation in multi-channel CSMA networks:\\an analytical framework}

\author{Alessandro Zocca\footnote{California Institute of Technology, Pasadena, US. Email: \texttt{azocca@caltech.edu}}}

%



\maketitle

\begin{abstract}
In this paper we consider a stochastic model for a frequency-agile CSMA protocol for wireless networks where multiple orthogonal frequency channels are available. Even when the possible interference on the different channels is described by different conflict graphs, we show that the network dynamics can be equivalently described as that of a single-channel CSMA algorithm on an appropriate virtual network.
Our focus is on the asymptotic regime in which the network nodes try to activate aggressively in order to achieve maximum throughput. Of particular interest is the scenario where the number of available channels is not sufficient for all nodes of the network to be simultaneously active and the well-studied temporal starvation issues of the single-channel CSMA dynamics persist.
For most networks we expect that a larger number of available channels should alleviate these temporal starvation issues. However, we prove that the aggregate throughput is a non-increasing function of the number of available channels. 
To investigate this trade-off that emerges between aggregate throughput and temporal starvation phenomena, we propose an analytical framework to study the transient dynamics of multi-channel CSMA networks by means of first hitting times.
Our analysis further reveals that the mixing time of the activity process does not always correctly characterize the temporal starvation in the multi-channel scenario and often leads to pessimistic performance estimates.
\end{abstract}

\section{Introduction}
\label{sec1}
The Carrier-Sense Multiple-Access (CSMA) algorithm is a popular distributed medium-access control mechanism and indeed various incarnations of which are currently implemented in IEEE 802.11 WiFi networks. CSMA is a random-access algorithm, in the sense that it relies on randomness to both avoid simultaneous transmissions and share the medium in the most efficient way. Being intrisincally a randomized scheme, many stochastic models have been developed in the literature to study its performance in terms of throughput, stability, delay, and spatial fairness, see \eg \cite{Yun2012} and references therein for a detailed overview.

It is well-known that the delay performance of CSMA algorithms can be rather poor and much worse than for other mechanisms, such as MaxWeight. One of the root causes for these poor delay performances has been identified in the \textit{temporal starvation phenomenon}: even in scenarios where all nodes have a good long-term throughput, they may individually experience long sequences of transmissions in rapid succession, interspersed with extended periods of starvation. These starvation effects are particularly pronounced when the nodes become more aggressive in trying to activate, which is the regime in which the network should operate to achieve maximum throughput.

Most of the literature focuses on single-channel CSMA algorithms, in which all the transmission occur on the same frequency. In this paper we consider the natural generalization in which multiple orthogonal frequency channels are available; various variations of this multi-channel CSMA model have been studied in~\cite{AZ13,Andrews2017,Block2016,BF12,LCCL12,LZCC10,PYLC10}. Our ultimate goal is understanding whether temporal starvation effects can be effectively mitigated by the usage of multiple channels, as suggested in~\cite{LCCL12}.

In order to make a fair comparison, we do not assume that additional channels can be added to the existing one, but rather that the total available wireless spectrum could be divided into $\nc$ non-overlapping channels with smaller capacity on which nearby nodes can transmit without interfering with each other. In this way, at the cost of a potentially lower throughput, the starvation effects in the network can be alleviated or even eliminated and in this way obtain substantial improvements in delay performance. The aim of the present paper is to investigate this non-trivial trade-off and for this reason we analyze the aggregate throughput of multi-channel CSMA networks and develop an analytical framework for quantifying temporal starvation effects for these networks.

The temporal starvation in the context of multi-channel CSMA networks has been mostly studied by means of mixing times~\cite{LCCL12}. The large majority of these results, however, assumes either that the activation rate is very small or that there are enough channels for all the nodes to be active simultaneously, so that existing results for multi-color Glauber dynamics can be exploited. 
The first crucial difference of our approach is that we are mostly interested in the scenario in which the number of available channels does not allow simultaneous activity of all the nodes. This creates a complex network dynamics in which nodes still compete for transmission and phenomena like temporal starvation and spatial unfairness persist.
Furthermore, instead of focusing on mixing times, inspired by~\cite{Kai2015}, we study temporal starvation by means of hitting times of the Markov process describing the CSMA dynamics.

We focus on the regime in which the activation rate grows large, in which the CSMA dynamics favors the activity states with a maximum number of active nodes, to which we will refer as \textit{dominant states}. These activity states play a crucial role in our analysis, since the timescales at which transitions between them occur are intimately related to the magnitude of the temporal starvation effects.

Our contributions can be summarized as follows.
\begin{itemize}
	\item We show that the multi-channel CSMA dynamics can be represented as single-channel CSMA dynamics on an appropriate virtual network, even when the possible interference on the different channels is described by different conflict graphs. This equivalent representation allows us to immediately derive the stationary distribution of multi-channel CSMA dynamics and to prove its insensitivity to back-off and transmission time distributions.
	\item We prove various properties of the asymptotic aggregate throughput and, in particular, that is a non-increasing function of the number $\nc$ of available channels.
	\item We show how the temporal starvation can be evaluated in the high-activation limit by studying the expected transition times between the dominant activity states. Furthermore, we characterize the timescale of temporal starvation phenomena in terms of the structure of the state space of the Markov process describing the CSMA dynamics.
	\item We analyze the mixing time of multi-channel CSMA dynamics using the same framework built to study the transition times between dominant states. Our analysis suggests that in the high-activation regime the mixing time does not always correctly characterize the temporal starvation timescale in multi-channel CSMA networks, often leading to pessimistic performance estimates.
	\item By means of various counterexamples, we show that many desirable properties and performance indices are not monotone in the number $\nc$ of available channels, revealing the difficulty of finding analytically the best trade-off between throughput and temporal starvation effects.
\end{itemize}

The rest of the paper is organized as follows. The CSMA network models for both the single-channel and multiple-channel case are described in Section~\ref{sec2}. The equivalent representation for multi-channel CSMA networks is presented in Section~\ref{sec3}. In Section~\ref{sec4} we introduce the notion of dominant activity states and derive the properties of the aggregate throughput. We show in Section~\ref{sec5} how we characterize temporal starvation by looking at asymptotic results for transition times between dominant states and relate it to structural property of the underlying state space. Section~\ref{sec6} is entirely devoted to mixing times and to their relation with the worst temporal starvation timescale.
Lastly, in Section~\ref{sec7} we discuss some generalizations of our model and show how our framework extends to them.

\section{Model description}
\label{sec2}
In this section we introduce the stochastic models that describe the dynamics of random-access networks operating according to CSMA-like algorithms. We present first the model for the case of single-channel CSMA algorithm and later its generalization to the multi-channel scenario. The two models are presented separately for notational convenience especially since in the next section we will show how the multi-channel model has an equivalent representation as single-channel model.

\subsection{Single-channel CSMA network}
\label{sub21}
We consider a network of transmitter-receiver pairs sharing a wireless medium according to a CSMA-type algorithm. A \textit{node} indicates potential data transmission between a transmitter and a receiver. 


Every node can either be active or inactive, depending on whether the data transmission is ongoing or not. We assume that the network consists of $N$ such nodes, so that the network activity state can then be described by an $N$-dimensional vector $x$, where $x_i=1$ if node $i$ is active and $x_i=0$ otherwise.

We assume that the network structure and interference conditions can be described by means of an undirected finite graph $G=(V,E)$, called \textit{conflict graph}, where the set of vertices $V=\{1,\dots,N\}$ represents the nodes of the network and the set of edges $E \subseteq V \times V$ indicate which pairs of nodes cannot be active simultaneously. Therefore, neighboring nodes in the conflict graph are prevented from simultaneous activity by the carrier-sensing mechanism. 


We focus on the scenario where nodes are saturated, which means that nodes always have packets available for transmission and it is particularly relevant in high-load regimes. The transmission times of node $i$ are independent and exponentially distributed with mean $1/\mu_i$. When the transmission of a packet is completed, node $i$ \textit{deactivates} (i.e., releases the medium) and starts a back-off period. The back-off periods of node $i$ are independent and exponentially distributed with mean $1/\nu_i$. At the end of each back-off period node $i$ \textit{activates} (i.e., starts the transmission of a new packet) if and only if all its neighboring nodes on $G$ are currently inactive.

Let $\cX \subseteq \{0,1\}^N$ be the set of all feasible joint activity states of the network. Since the interference is modeled by the conflict graph $G$, the set $\cX$ consists of the incidence vectors of all independent sets of the conflict graph $G$:
\[
	\cX := \Bigl \{ x \in \{0,1\}^N ~:~ x_i x_j=0 \, \, \forall \, (i,j) \in E \Bigr \}.
\]
If we let $X(t) \in \cX$ denote the network activity state at time $t$, then the CSMA dynamics is described by a continuous-time Markov process $\{X(t)\}_{t \geq 0}$ on the state space $\cX$ with transition rates between $x,y \in \cX$ given by
\[
	q(x,y):=
	\begin{cases}
		\nu_i & \text{if } y = x + e_i \in \cX,\\
		\mu_i & \text{if } y = x - e_i \in \cX,\\
		0 & \text{otherwise,}
	\end{cases}
\]
where $e_i \in \{0,1\}^N$ is the vector with all zeros except for a $1$ in position $i$. The Markov process $\{X(t)\}_{t\geq 0}$ is reversible~\cite{BKMS87} and has a product-form stationary distribution
\begin{equation}
\label{eq:statdist}
	\pi(x) := Z^{-1} \prod_{i=1}^N \Bigl (\frac{\nu_i}{\mu_i} \Bigr)^{x_i}, \quad x \in \cX,
\end{equation}
where $Z$ is the normalizing constant 
\[
	Z := \sum_{x \in \cX}  \prod_{i=1}^N \Bigl (\frac{\nu_i}{\mu_i} \Bigr)^{x_i}.
\]
The stationary distribution~\eqref{eq:statdist} is insensitive to the distributions of back-off periods and transmission times, in the sense that it depends on these only through their averages $1/\nu_i$ and $1/\mu_i$, as proved in~\cite{vdVBvLP10}. Hence, \eqref{eq:statdist} holds in fact for general back-off and transmission time distributions.

\subsection{Multi-channel CSMA network}
\label{sub22}
In this paper we consider a generalization of the saturated single-channel CSMA model described in Subsection~\ref{sub21} where each node can sense the interference and transmit on any of the $\nc$ available channels, but at most on one at a time.
We assume that for every $c=1,\dots,\nc$ all possible conflicts between nodes on channel $c$ are described by a conflict graph $G_c=(V,E_c)$. Node $i$ in the network has a different back-off timer for each of the $\nc$ available channels and we model these timers as $\nc$ independent Poissonian clocks, ticking at rates $\nu_{i,1},\dots, \nu_{i,\nc}$. When the first of these $\nc$ clocks rings for an inactive node, it activates on the corresponding channel, say $c$, if and only if the neighboring nodes of $i$ in $G_c$ are not active on the same channel. The transmission times of node $i$ on channel $c$ are independent and exponentially distributed with mean $1/\mu_{i,c}$.

A network activity state is described by a vector $x \in \{0,1,\dots, \nc\}^N$ where $x_i=0$ if node $i$ is inactive and $x_i = c$ if node $i$ is active on channel $c$ with $ 1 \leq c \leq \nc$. Let $\cX_C$ be the collection of feasible network activity states, which consists of all the vectors $x \in \{0,1,\dots, \nc\}^N$ such that for every $c=1,\dots,C$ two neighboring nodes in $G_c$ are not simultaneously active on channel $c$, \ie
\begin{equation}
\label{eq:admissiblestatesmultichannel}
	{\small \cX_C :=\{ x \in \{0,1,\dots,\nc\}^N  \,: \, \forall \, c \, \forall \, (i,j) \in E_c \, x_i x_j =0 \text{ or } x_i \neq x_j \}.}
\end{equation}
If the vector $X_C(t) \in \cX_C$ describes the activity state of the network at time $t$, then $\{X_C(t)\}_{t \geq 0}$ is a Markov process on the state space $\cX_C$ with transition rates between $x,y \in \cX_C$ given by
\begin{equation}
\label{eq:multichannel_rates}
	q_C(x,y)=
	\begin{cases}
		\nu_{i,c} & \text{ if } y = x + c \cdot e_i \in \cX_C,\\
		\mu_{i,c} & \text{ if } y = x - c \cdot e_i \in \cX_C,\\
		0 & \text{ otherwise.}
	\end{cases}
\end{equation}
Later we will briefly discuss a generalization of this multi-channel CSMA model in which each node can possibly be simultaneously active on more than one channel and discuss which of our results extend to this setting, see Section~\ref{sec7}.

\section{An equivalent representation for multi-channel CSMA networks}
\label{sec3}
In this section we argue that the evolution of a network using a multi-channel CSMA algorithm can be equivalently described as a \textit{virtual network} operating under the single-channel CSMA algorithm on a modified conflict graph $G^*$. 

Consider the undirected graph $G^*=(V^*,E^*)$ with vertex set $V^*:= V \times \{1,\dots,\nc\}$ where two nodes $(i,c)$ and $(i',c')$ are adjacent if and only if
\[
	\begin{cases}
		c \neq c',\\
		i = i',	
	\end{cases}
	\quad \text{ or } \quad
	\begin{cases}
		c=c',\\
		(i,i') \in E_c.	
	\end{cases}
\]
To avoid confusion with the original nodes, we refer to the nodes of $G^*$ as \textit{virtual nodes}. The activity state of the virtual network is then represented by a $0$-$1$ vector of length $\nc \cdot N$ and we denote by $\cX^* \subset \{0,1\}^{ \nc \cdot N}$ the set of admissible virtual activity states on $G^*$. 

Define the function $\mathcal V: \cX^* \to \cX_C$ that maps a virtual network state $\mathbf{x} \in \cX^*$ in the network state $\mathcal V(\mathbf{x}) \in \cX_C$ such that for every $i=1,\dots, N$
\[
	\mathcal V(\mathbf{x})_i =
	\begin{cases}
		c & \text{if } \exists \, c \text{ s.t. } \mathbf{x}_{(i,c)}=1,\\
		0 & \text{otherwise}.
	\end{cases}
\]
The function $\mathcal V$ is well defined, since for every $c\neq c'$ the virtual nodes $(i,c)$ and $(i,c')$ are neighbors in $G^*$ and thus only one of them can be active. Furthermore, it is easy to check that $\mathcal V$ is a bijection, from which the following lemma immediately follows.

\begin{lemma}
The collection $\cX^*$ of activity states on the virtual network $G^*$ is in one-to-one correspondence with the collection $\cX_C$ of multi-channel activity states on $G$ introduced in~\eqref{eq:admissiblestatesmultichannel}.
\end{lemma}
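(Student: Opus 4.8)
The plan is to exhibit an explicit two-sided inverse of $\mathcal V$ rather than to argue injectivity and surjectivity separately; since the excerpt already records that $\mathcal V$ is well-defined, producing an inverse defined on all of $\cX_C$ and checking the two compositions is enough. First I would define the candidate inverse $\mathcal W : \cX_C \to \cX^*$ by
\[
	\mathcal W(x)_{(i,c)} := \ind{x_i = c}, \qquad i=1,\dots,N,\ c=1,\dots,\nc,
\]
so that the virtual node $(i,c)$ is switched on precisely when node $i$ transmits on channel $c$, and no virtual node of the form $(i,\cdot)$ is active when $x_i=0$.

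The heart of the argument is to show that the constraints defining $\cX^*$ and those defining $\cX_C$ correspond under these maps, and this reduces to matching the two types of adjacency in $G^*$ with the two requirements in~\eqref{eq:admissiblestatesmultichannel}. Recall that $\cX^*$ consists of the incidence vectors of independent sets of $G^*$, so membership means that no two adjacent virtual nodes are simultaneously active. This splits into two cases according to the definition of $E^*$: (i) for each fixed $i$ the virtual nodes $(i,1),\dots,(i,\nc)$ are pairwise adjacent and hence form a clique, so at most one of them is active---this is exactly the statement that $\mathcal V(\mathbf x)$ assigns a single channel value to node $i$, and it is what underlies the well-definedness of $\mathcal V$; and (ii) for each channel $c$ and each edge $(i,j)\in E_c$ the virtual nodes $(i,c)$ and $(j,c)$ cannot both be active---this is precisely the condition that $i$ and $j$ are not simultaneously active on channel $c$, i.e.\ the constraint appearing in~\eqref{eq:admissiblestatesmultichannel}. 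Reading this equivalence forward shows $\mathcal V(\mathbf x)\in\cX_C$ whenever $\mathbf x\in\cX^*$, and reading it for $\mathcal W$ shows that $\mathcal W(x)$ is an independent set of $G^*$, so that $\mathcal W(x)\in\cX^*$ for every $x\in\cX_C$.

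Finally I would confirm $\mathcal V\circ\mathcal W=\mathrm{id}_{\cX_C}$ and $\mathcal W\circ\mathcal V=\mathrm{id}_{\cX^*}$ by a coordinatewise check: for $x\in\cX_C$ with $x_i=c\ge 1$ one has $\mathcal V(\mathcal W(x))_i=c$ because $\mathcal W(x)_{(i,c)}=1$, while for $x_i=0$ all coordinates $\mathcal W(x)_{(i,\cdot)}$ vanish and $\mathcal V(\mathcal W(x))_i=0$; the reverse composition is verified the same way using the clique property from case (i). I do not expect a genuine obstacle here---the lemma is essentially the bookkeeping identity that a multi-channel state is the same datum as an independent set of $G^*$. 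The only point requiring care is keeping the inactive value $x_i=0$, which has no corresponding active virtual node, separate from the active values $x_i=c\ge 1$, and invoking the clique structure of $\{(i,c)\}_{c=1}^{\nc}$ to guarantee that $\mathcal V$ is single-valued.
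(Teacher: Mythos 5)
Your proposal is correct and follows essentially the same route as the paper: the paper defines $\mathcal V$, notes its well-definedness via the clique $\{(i,c)\}_{c=1}^{\nc}$ in $G^*$, and simply asserts that $\mathcal V$ is ``easy to check'' to be a bijection, which is exactly what you verify by exhibiting the explicit inverse $\mathcal W(x)_{(i,c)} = \ind{x_i = c}$, matching the two adjacency types in $E^*$ with the two constraints in~\eqref{eq:admissiblestatesmultichannel}, and checking both compositions coordinatewise. Your write-up merely fills in the routine details the paper leaves implicit, and it does so correctly.
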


Suppose further that the $\nc \cdot N$ virtual nodes in $G^*$ operate using the single-channel CSMA algorithm described in Subsection~\ref{sub21}, assuming that the virtual node $(i,c)$ has back-off periods that are exponentially distributed with mean $1/ \nu_{i,c}$ and transmission periods that are exponentially distributed with mean $1/ \mu_{i,c}$. The virtual network activity evolves then as a continuous-time Markov process on $\cX^*$, which we denote as $\{\widetilde{X}(t)\}_{t\geq 0}$.

It is easy to check that the transition rate between any two virtual network states $\mathbf{x}, \mathbf{y} \in \cX^*$ is the same as the transition rate given in~\eqref{eq:multichannel_rates} between the multi-channel activity states $\mathcal V(\mathbf{x}), \mathcal V(\mathbf{x}) \in \cX_C$. Hence, the processes $\{X_C(t)\}_{t \geq 0}$ and $\{\widetilde{X}(t)\}_{t\geq 0}$ are two representations of the same Markovian dynamics, and if $X_C(0) = \mathcal V(\widetilde{X}(0))$, then
\[
	X_C(t) \ed \mathcal V(\widetilde{X}(t)), \quad \forall \, t \geq 0.
\]

This equivalent representation can be used in combination with~\eqref{eq:statdist} to obtain the stationary distribution $\pi_C$ of the multi-channel activity process $\{X_C(t)\}_{t \geq 0}$, as illustrated by the following proposition.

\begin{proposition}
The multi-channel network activity process $\{X_C(t)\}_{t \geq 0}$ has product-form stationary distribution
\begin{equation}
\label{eq:statdist_multichannel}
	\pi_C(x) = Z^{-1} \prod_{i=1}^{N} \prod_{c=1}^{\nc}\left ( \frac{\nu_{i,c}}{\mu_{i,c}} \right )^{\mathds{1}_{\{x_i=c\}}}, \quad x \in \cX_C,
\end{equation}
where $Z$ is the appropriate normalizing constant. 
\end{proposition}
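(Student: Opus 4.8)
The plan is to transport the single-channel stationary distribution~\eqref{eq:statdist} along the bijection $\mathcal V$ established in the preceding lemma, exploiting the fact that $\{X_C(t)\}_{t\geq 0}$ and $\{\widetilde X(t)\}_{t\geq 0}$ are two representations of the same Markovian dynamics. Since the virtual network $G^*$ operates under the single-channel CSMA algorithm of Subsection~\ref{sub21}, with virtual node $(i,c)$ having back-off rate $\nu_{i,c}$ and transmission rate $\mu_{i,c}$, the process $\{\widetilde X(t)\}_{t\geq 0}$ is reversible and its stationary distribution is given directly by~\eqref{eq:statdist} applied to the $\nc\cdot N$ virtual nodes:
\[
	\tilde\pi(\mathbf x) = Z^{-1}\prod_{i=1}^N\prod_{c=1}^{\nc}\Bigl(\frac{\nu_{i,c}}{\mu_{i,c}}\Bigr)^{\mathbf x_{(i,c)}}, \qquad \mathbf x\in\cX^*.
\]

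Next I would push this distribution forward through $\mathcal V$. Because $\mathcal V$ is a bijection and the two processes are merely two labelings of the same chain, the stationary distribution of $\{X_C(t)\}_{t\geq 0}$ is $\pi_C = \tilde\pi\circ\mathcal V^{-1}$, i.e. $\pi_C(x) = \tilde\pi(\mathcal V^{-1}(x))$ for $x\in\cX_C$. It then remains only to rewrite the exponents. By the definition of $\mathcal V$, if $\mathbf x = \mathcal V^{-1}(x)$ then $\mathbf x_{(i,c)}=1$ precisely when node $i$ is active on channel $c$ in the state $x$, that is $\mathbf x_{(i,c)} = \mathds{1}_{\{x_i=c\}}$. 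Substituting this identity into $\tilde\pi$ yields exactly~\eqref{eq:statdist_multichannel}, with the same normalizing constant $Z$ since the bijection preserves the per-state weights $\prod_{i,c}(\nu_{i,c}/\mu_{i,c})^{\cdots}$.

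There is essentially no hard obstacle here, as the entire content has been front-loaded into the process equivalence and the known reversibility and product form of single-channel CSMA. The only points I would make explicit are: (i) one should invoke irreducibility of $\{X_C(t)\}_{t\geq 0}$ on $\cX_C$ so that the pushforward of $\tilde\pi$ is the \emph{unique} stationary distribution, not merely a stationary measure; and (ii) the exponent identity $\mathbf x_{(i,c)} = \mathds{1}_{\{x_i=c\}}$ is just the statement that $\mathcal V$ records which channel (if any) each node occupies, which is immediate from the construction of $G^*$ and $\mathcal V$.
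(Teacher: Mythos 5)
Your proposal is correct and follows essentially the same route as the paper: applying the single-channel product-form distribution~\eqref{eq:statdist} to the virtual network $G^*$ with virtual node $(i,c)$ having rates $\nu_{i,c}$ and $\mu_{i,c}$, then transporting it through the bijection $\mathcal V$ via the identity $\mathbf x_{(i,c)} = \mathds{1}_{\{x_i=c\}}$. Your explicit remarks on irreducibility (uniqueness of the stationary distribution) and on the exponent identity are sound finishing touches, but the substance is identical to the paper's argument.
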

We remark that multi-channel CSMA dynamics was already shown in~\cite{BF12} to have a product-form distribution and, in the special case where the interference is described by the same conflict graph on every channel $E_1=\dots=E_{\nc}$, also in~\cite{PYLC10}. Our proof approach based on this equivalent representation recovers the same result in the most general case and immediately shows that the insensitivity property carries over to the stationary distribution~\eqref{eq:statdist_multichannel} of the multi-channel CSMA dynamics, which thus holds for general back-off and transmission time distributions.

\section{Dominant states and aggregate throughput}
\label{sec4}
In this section we study how the number of channels $\nc$ affects the performance in terms of aggregate throughput. While increasing the number of channels evidently provides greater transmission opportunities, the net impact on throughput performance is non-obvious since the transmission capacity per channel is inversely proportional to the number of channels. 

Consider a multi-channel CSMA network as described in Subsection~\ref{sub22} and assume further that each of these channels has capacity $1/\nc$ and that all the nodes have homogeneous activation and transmission rates, namely
\[
	\nu_{i,c} \equiv \nu \, \text{ and } \, \mu_{i,c} \equiv \mu \quad \forall \, \,c=1,\dots,\nc, \quad \forall \, i =1,\dots,N.
\]
Without loss of generality, we henceforth also assume that $\mu=1$. The stationary distribution~\eqref{eq:statdist_multichannel} of the activity process $\{X_C(t)\}_{t \geq 0}$ then reads
\begin{equation}
\label{eq:statdist_multichannel_homogeneous}
	\pi_C(x) = Z^{-1} \prod_{i=1}^{N} \nu^{\ind{x_i \neq 0}}= Z^{-1} \nu^{a(x)}, \quad x \in \cX_C, 
\end{equation}
where $a(x):=\sum_{i=1}^N \ind{x_i \neq 0}$ is the number $a(x)$ of active nodes in $x$, regardless of the channels they are active on. As~\eqref{eq:statdist_multichannel_homogeneous} shows, when $\nu >1$, the stationary probability of an activity state $x \in \cX_C$ increases with its cardinality in an exponential fashion.

Define $\ac$ to be the maximum number of active nodes in conflict graph $G$ when $C$ channels are available, \ie
\[
	\ac:=\max_{x \in \cX_C} \sum_{i \in V} \mathds{1}_{\{ x_i \neq 0 \}} = \max_{x \in \cX_C} a(x).
\]
In this regime, the stationary distribution~\eqref{eq:statdist_multichannel_homogeneous} favors the activity states with a maximum number of active nodes. We refer to such activity states as \textit{dominant (activity) states} and denote by $\cD$ their collection, \ie
\[
	\cD := \arg\max_{x \in \cX_C} a(x) = \{ x \in \cX_C : a(x)=\ac\}.
\]

In the rest of the section, we assume further that the same conflict graph $G=(V,E)$ describes the interference on all $\nc$ channels, allowing us to draw a parallel between network activity states and colorings of the graph $G$. 

In order to characterize the dominant states of a multi-channel CSMA network, we need some further definitions. A \textit{vertex coloring} of the graph $G$ is a labelling of the graph’s vertices with colors such that no two vertices sharing the same edge have the same color. A \textit{(proper) $C$-coloring} is a vertex coloring using at most $C$ colors, see \eg \cite{B89,C89}. The smallest number of colors needed to color a graph $G$ is called its \textit{chromatic number}, and is denoted as $\chi(G)$. 

If $C=1$, we are back in the single-channel scenario and the activity states are in one-to-one correspondence with the independent sets of the graph $G$. In particular, $\mathcal{A}(1)$ is equal to the cardinality $\alpha(G)$ of the maximum independent set on $G$, often referred to as the \textit{independence number} of the graph $G$. 

For $C \geq 2$, we showed in Section~\ref{sec3} that the multi-channel activity on $G$ is equivalent to the single-channel activity on the virtual network $G^*$. Since we assumed $E_1=\dots=E_C$, the virtual network $G^*$ is the Cartesian product of $G$ and the complete graph with $C$ nodes. In this scenario $\ac=\alpha(G^*)$, where $\alpha(G^*)$ is the independence number of the graph $G^*$, as proved~\cite[Lemma 1]{B89}.

If the number of available channels in a CSMA network is larger than or equal to the chromatic number of the corresponding conflict graph, \ie $\nc \geq \chi(G)$, then every proper $\nc$-coloring of the graph $G$ corresponds to a dominant state for the network dynamics and, in particular, $\ac=N$. 

If instead $\nc < \chi(G)$, by definition of the chromatic number, a proper $\nc$-coloring of the graph $G$ does not exist and, in particular, there are no admissible activity states where all nodes are active. All admissible activity states correspond to \textit{partial $\nc$-colorings} of the graph $G$. However, the problem of finding the maximum number of nodes that can be active simultaneously in a general conflict graph $G$ with $\nc$ available channels is non-trivial, since it is at least as hard as finding the maximum independent sets of $G^*$. 

Since nodes are saturated, the \textit{throughput} $\theta_i(C)$ of node $i$ is proportional to the long-run fraction of time that node $i$ is active and can be expressed as
\begin{equation}
\label{eq:asythroughput}
	\theta_i (C):= \frac{1}{C} \sum_{x \in \cX_C} \pi_C(x) \mathds{1}_{\{ x_i \neq 0 \}},
\end{equation}
where the constant $1/C$ accounts for the fact that each channel has capacity $1/C$. Being a function of the stationary distribution, the throughput is also insensitive to the distributions of the back-off and transmission times.

We analyze the \textit{aggregate throughput} $\Theta(C)$ of a multi-channel CSMA network with $C$ available channels in the asymptotic regime where the activation rate $\nu$ grows large, defined as
\[
	\Theta(C) := \lim_{\ninf} \sum_{i \in V} \theta_i(C) = \lim_{\ninf} \sum_{i \in V} \frac{1}{C} \sum_{x \in \cX_C} \pi_C(x) \mathds{1}_{\{ x_i \neq 0 \}}.
\]
Even if it may be infeasible to calculate $\ac$ for a given random-access network using $C$ channels, some conclusions for its aggregate throughput $\Theta(C)$ can still be drawn, as illustrated by the following theorem. 

\begin{theorem}[Aggregate throughput]\label{thm:throughput}
The following statements hold for the aggregate throughput $\Theta(C)$ of a multi-channel CSMA network where the same conflict graph $G=(V,E)$ describes the interference on all $\nc$ channels:
\begin{itemize}
	\item	[(i)] $\displaystyle \Theta(C) = \ac/C$ for every $C\geq 1$;
	\item	[(ii)] $\Theta(C)=\alpha(G)$ for every $1 \leq C \leq C^*(G)$, where $C^*(G)$ is the number of disjoint maximum independent sets of the graph $G$ ;
	\item	[(iii)] $\displaystyle \Theta(C)=N/C$ for every $C \geq \chi(G)$;
	\item	[(iv)] $\Theta(C)$ is a non-increasing function of the number of channels $C$.
\end{itemize}
\end{theorem}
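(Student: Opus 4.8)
The plan is to derive all four statements from the single combinatorial quantity $\ac$, so I would prove~(i) first and then reduce the remaining parts to properties of the map $C \mapsto \ac$. For~(i), I would interchange the two finite sums defining the aggregate and use the product form~\eqref{eq:statdist_multichannel_homogeneous}:
\[
	\sum_{i \in V}\theta_i(C) = \frac{1}{C}\sum_{x \in \cX_C}\pi_C(x)\sum_{i\in V}\ind{x_i \neq 0} = \frac{1}{C}\sum_{x\in\cX_C}\pi_C(x)\,a(x).
\]
Writing $n_k$ for the number of states with $a(x)=k$, this equals $\tfrac1C\,(\sum_k k\,n_k\nu^k)/(\sum_k n_k \nu^k)$, a ratio of polynomials in $\nu$ whose leading terms both come from the top exponent $k=\ac$. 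Since $\cD$ is nonempty ($n_{\ac}\geq 1$), letting $\ninf$ leaves only these terms and yields $\Theta(C)=\ac/C$, which is~(i).

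The decisive observation is that $\ac$ admits a clean combinatorial description. Since each node is active on at most one channel and the nodes active on a fixed channel form an independent set of $G$, a state $x\in\cX_C$ is exactly an assignment of $C$ pairwise disjoint independent sets $I_1,\dots,I_C$ to the channels, with $a(x)=\sum_{c=1}^C |I_c|$. Hence
\[
	\ac = \max\Big\{\textstyle\sum_{c=1}^C |I_c| \;:\; I_1,\dots,I_C \text{ pairwise disjoint independent sets of } G\Big\}.
\]
Two immediate consequences give~(ii) and~(iii). Because each $|I_c|\leq \alpha(G)$ we always have $\ac \leq C\,\alpha(G)$; and when $C\leq C^*(G)$ there exist $C$ pairwise disjoint maximum independent sets, which attain this bound, so $\ac=C\,\alpha(G)$ and~(i) gives $\Theta(C)=\alpha(G)$, proving~(ii). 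For~(iii), $C\geq\chi(G)$ means $V$ partitions into $C$ independent sets, so $\ac=N$ and $\Theta(C)=N/C$.

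The heart of the theorem is the monotonicity~(iv), for which I would use an averaging (leave-one-out) argument on an optimal configuration. By~(i) it suffices to show $(C+1)\,\ac \geq C\,\mathcal A(C+1)$. Fix disjoint independent sets $I_1,\dots,I_{C+1}$ attaining $\mathcal A(C+1)=\sum_{c=1}^{C+1}|I_c|$. For each $j$, deleting $I_j$ leaves $C$ pairwise disjoint independent sets, i.e.\ a feasible $C$-channel configuration, so $\ac \geq \mathcal A(C+1)-|I_j|$. Summing over $j=1,\dots,C+1$ and using $\sum_{j}|I_j|=\mathcal A(C+1)$ gives $(C+1)\ac \geq (C+1)\mathcal A(C+1)-\mathcal A(C+1)=C\,\mathcal A(C+1)$, hence $\ac/C\geq \mathcal A(C+1)/(C+1)$, which is~(iv).

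I expect the only genuinely delicate point to be the combinatorial reformulation of $\ac$ as the maximum total size of $C$ pairwise disjoint independent sets (equivalently, the independence number of the virtual graph $G^*$ identified in Section~\ref{sec3}); once this is in place, parts~(ii)--(iv) are short, and~(iv) reduces to the clean leave-one-out averaging above. The asymptotic step in~(i) is routine, needing only the non-emptiness of $\cD$ and the dominance of the highest-degree terms of the two polynomials in $\nu$.
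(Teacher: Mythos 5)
Your proposal is correct and follows essentially the same route as the paper: part (i) via the asymptotic dominance of the states in $\cD$ under~\eqref{eq:statdist_multichannel_homogeneous}, parts (ii)--(iii) via the identification of states in $\cX_C$ with partial $C$-colorings (disjoint independent sets) of $G$, and part (iv) via removing one color class from an optimal $(C+1)$-configuration to get $\ac \geq \frac{C}{C+1}\mathcal{A}(C+1)$. Your leave-one-out averaging over all $C+1$ classes is just a repackaging of the paper's choice of the least-used color (whose size is at most the average), with the minor cosmetic benefits that it avoids the paper's case split at $\chi(G)$ and replaces the ``induction'' claim in (ii) by an explicit two-sided bound.
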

\begin{proof}
The definition of dominant state and~\eqref{eq:statdist_multichannel_homogeneous} yield that $\sum_{x \in \cD} \pi_C(x) \to 1$ as $\ninf$, and thus
\begin{align*}
\Theta(C)
	 = \lim_{\ninf} \sum_{i \in V} \frac{1}{C} \sum_{x \in \cX_C} \pi_C(x) \, \ind{x_i \neq 0} 
	 = \frac{1}{C} \lim_{\ninf} \sum_{x \in \cX_C} \pi_C(x) \sum_{i \in V} \ind{x_i \neq 0}
	 = \frac{\ac}{C} \lim_{\ninf}  \sum_{x \in \cD} \pi_C(x)  = \frac{\ac}{C}.
\end{align*}
As mentioned earlier, when $C \geq \chi(G)$ there exists a proper $C$-coloring of the graph $G$, which means all nodes can be simultaneously active and thus $\ac=N$. Furthermore, it is easy to prove by induction that $\ac=C \cdot \alpha(G)$ for every $C \leq C^*(G)$. To prove (iv), we will show that the following inequality holds
\[
	\frac{\mathcal{A}(C+1)}{C+1} \leq \frac{\ac}{C}, \quad \forall \, C \geq 1.
\]
The inequality trivially holds when $C \geq \chi(G)$, since $\ac=N$. The proof in the case $C \leq \chi(G)$ readily follows by considering a $(C+1)$-coloring that yields $\mathcal{A}(C+1)$, pick the color that is used the least and discolor the corresponding nodes, which are at most $\lfloor \frac{\mathcal{A}(C+1)}{C+1} \rfloor$, obtaining a $C$-coloring with $\mathcal{A}(C+1) - \lfloor \frac{\mathcal{A}(C+1)}{C+1} \rfloor \geq \frac{C}{C+1} \mathcal{A}(C+1)$ colored nodes. Using this newly obtained $C$-coloring we immediately get that $\mathcal{A}(C) \geq \frac{C}{C+1} \mathcal{A}(C+1)$.
\end{proof}
%
%
%

In view of the fact that $\Theta(C)$ is a non-increasing function of the number of channels $C$, it seems that the network should be operated using a single channel, but this is also the scenario where temporal starvation phenomena and spatial unfairness are often more pronounced. 

Jain's fairness index~\cite{Jain1984} is often used as a fairness measure in the context of random-access algorithms and we consider here its asymptotic counterpart $J(C)$ in the high-activation limit $\ninf$, \ie
\[
	J(C):= \limnu \frac{\left ( \sum_{i=1}^N \theta_i(C)\right )^2}{N \sum_{i=1}^N \theta_i(C)^2}.
\]
One may conjecture that a larger number of available channels should always reduce the spatial unfairness in the network. This is indeed true for many conflict graphs, but does not hold in general. Figure~\ref{fig:counterexample_jain} shows a particular example of a network in which by adding an additional channel, the spatial unfairness worsens as $J(2)<J(1)$.

\begin{figure}[!h]
\centering
	\subfloat[The three dominants states in $\mathcal{D}_1$, which yield $J(1) = 9/13 \approx 0.692$]{\includegraphics[scale=1.1]{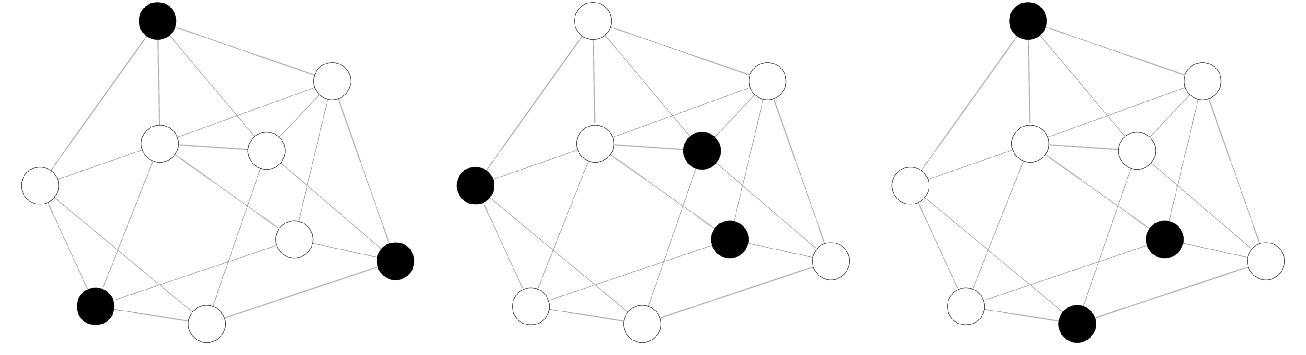}}\\
	\subfloat[The two dominants states in $\mathcal{D}_2$, which yield $J(2) = 2/3 \approx 0.667$]{\includegraphics[scale=1.1]{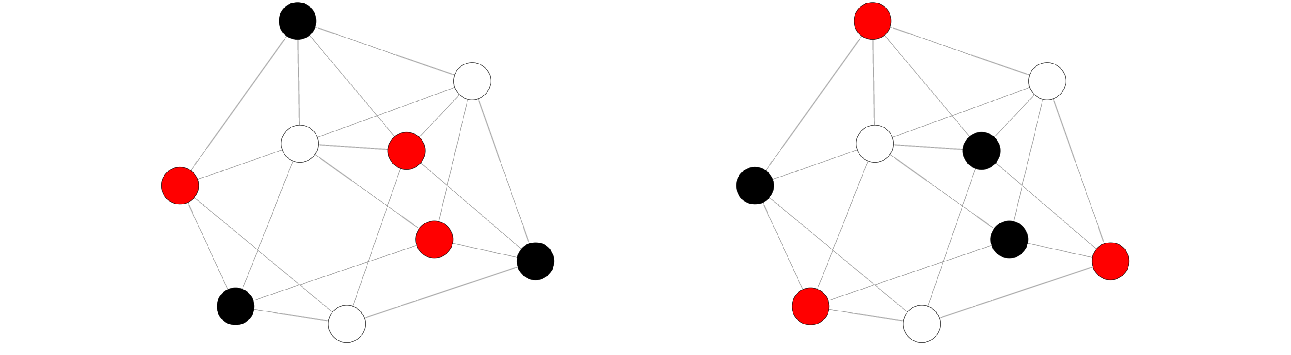}}
	\caption{Example of a network and its dominant states with $C=1$ and $C=2$ channels available, respectively.}
	\label{fig:counterexample_jain}
\end{figure}


As far as temporal starvation is concerned, we expect that by exploiting more channels the temporal starvation effects can be mitigated. However, making this dependence more explicit is a challenging task, since the structure of the conflict graph plays a crucial role via the parametric family $\cD$ of dominant activity states. In general, a non-trivial trade-off emerges between the magnitude of temporal starvation effects and the aggregate throughput that the network can achieve and a balance can be found using a number of channels in the range between $1$ and $\chi(G)$.

\section{Temporal starvation evaluation via transition times}
\label{sec5}
In this section, we introduce a framework to study and quantify temporal starvation effects in multi-channel CSMA networks in the high-activation regime $\ninf$.

When the activation rate grows large, the network spends roughly the same fraction of time in each dominant state. However, it takes a long time for the activity process to move from one dominant state to the other, since such a transition involves the occurrence of rare events. Indeed, intuitively, the activity process must follow a transition path through some activity states with fewer active nodes which are highly unlikely in view of~\eqref{eq:statdist_multichannel_homogeneous} and the time to reach such activity states is correspondingly long.

We study the transitions between dominant states in terms of first hitting times of the activity process $\{X_C(t)\}_{t\geq 0}$. In the limit $\ninf$ the asymptotic behavior of the transition time between any pair of dominant states can be characterized exactly on a logarithmic scale by studying the structural properties of state space $\cX_C$.

The main idea of our method is inspired by the concept of ``traps'' of the state space introduced in the context of single-channel CSMA networks in~\cite{KL11,Kai2015}. Traps are essentially subsets of activity states where the activity process remains for a long time before leaving. By focusing on the asymptotic regime $\ninf$, we can restrict our analysis only to the traps corresponding to dominant states and, moreover, characterize the timescale of the transitions between them without the need of a detailed description of the structure of these traps.

In this section, we consider the multi-channel CSMA dynamics with homogeneous rates as in Section~\ref{sec4} but 
here we do \textit{not} need to assume that the interference structure on different channels is described by the same conflict graph $G$. Note that the assumption of homogeneous rates across different nodes and channels is not crucial and in fact it can be relaxed, but to keep the exposition simple we do not present here the general case and the interested reader is referred to Section~\ref{sec7}, where we illustrate also other generalizations of our model for which this framework is still valid.

Let $\tau_{x,A}(\nu):=\inf\{t >0 : X_C(t) \in A \}$ be the \textit{first hitting time} of the subset $A \subset \cX_C$ for the Markov process $\{X_C(t)\}_{t\geq 0}$ starting in $x$ at $t = 0$.
For any $x,y \in \cX_C$, we say that $\o \subset \cX_C $ is a \textit{path} from $x$ to $y$ in $\cX_C$, and denote it by $\o: x \to y$, if $\o$ is a finite sequence of states $\o_1,\dots,\o_n \in \cX_C$ such that $\o_1=x$, $\o_n = y$ and the CSMA dynamics allows the step from $\o_i$ to $\o_{i+1}$ for every $i=1,\dots,n-1$. The \textit{communication height} between two states $x,y \in \cX_C$ is the minimum number of nodes (with respect to that of any dominant state) that need to become simultaneously inactive at some point along any path from $x$ to $y$, \ie
\begin{equation}
\label{eq:ch}
	\D(x,y):=\min_{\o: x \to y} \max_{z \in \o} (\ac-a(z)). 
\end{equation}
Both the notions of a path and communication height naturally extends to the case of two subsets $A, B \subset \cX_C$.

The communication height is a crucial quantity in our analysis since in the regime $\ninf$ the most likely trajectories from $x$ to $y$ are precisely those that achieve the min-max in~\eqref{eq:ch}. Indeed, all the other trajectories $\o: x \to y$ will visit at some point an activity state with fewer active nodes and thus correspondingly more rare in view of~\eqref{eq:statdist_multichannel_homogeneous}. The communication height $\D(\cdot,\cdot)$ defines an \textit{ultrametric} on $\cX_C$, since
\begin{enumerate}
	\item $\D(x,y) \geq 0$ and equality holds if and only if $x=y$;
	\item $\D(x,y)=\D(y,x)$;
	\item $\D(x,y) \leq \max\{ \D(x,z), \D(z,y)\}$.
\end{enumerate}
The second statement in property (1) holds since if $x\neq y$ at every step of the CSMA dynamics the number of active nodes must either increase or decrease by $1$ and cannot stay constant along any path from $x$ to $y$ and, hence, $\D(x,y)>0$.

For any $a\in \R^+$, we denote by $\log_\nu a$ the logarithm in base $\nu$. Since most of the results are in the limit $\ninf$, we will henceforth assume that $\nu > 1$, so that $\log_\nu(\cdot)$ is a strictly increasing function from $\R^+$ to $\R$.
The next theorem shows that the communication height $\D(s,D)$ between a dominant state $s \in \cD$ and a subset $D \subseteq \cD \setminus \{s\}$ completely characterizes on a logarithmic scale the expected transition time from $s$ to $D$ in the limit $\ninf$.

\begin{theorem}[Timescale of transitions between dominant states] \label{thm:hit}
For every dominant state $s \in \cD$ and target subset $D \subseteq \cD \setminus \{s\}$ of dominant states different from $s$, the following asymptotic equality holds
\[
	\limnu \log_\nu \E \tau_{s,D}(\nu) = \D(s,D)-1.
\]
\end{theorem}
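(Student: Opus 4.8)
The plan is to exploit the reversibility of $\{X_C(t)\}_{t\geq 0}$ together with the standard potential-theoretic representation of mean hitting times for reversible Markov processes. Writing $c(x,y):=\pi_C(x)q_C(x,y)$ for the conductance of the edge $\{x,y\}$, reversibility yields the symmetric expression $c(x,y)=Z^{-1}\nu^{\max\{a(x),a(y)\}}$, since an activation edge carries rate $\nu$ out of the lighter endpoint while a deactivation edge carries rate $\mu=1$ out of the heavier one. Letting $\mathrm{cap}(s,D)$ denote the capacity between $s$ and $D$ and $h_{s,D}(x):=\prin{\tau_{x,\{s\}}<\tau_{x,D}}$ the equilibrium potential, I would invoke the exact identity
\[
	\E\tau_{s,D}(\nu)=\frac{1}{\mathrm{cap}(s,D)}\sum_{x\in\cX_C}\pi_C(x)\,h_{s,D}(x),
\]
so that, after taking $\log_\nu$, it suffices to show that the numerator is of order $\nu^{o(1)}$ and that $\log_\nu\mathrm{cap}(s,D)\to 1-\D(s,D)$.

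The numerator is the easy part. Since $0\le h_{s,D}\le 1$ and $h_{s,D}(s)=1$, it lies between $\pi_C(s)$ and $1$. Because $s$ is dominant, $\log_\nu\pi_C(s)=\ac-\log_\nu Z$, and as $\ninf$ the partition function $Z=\sum_{x\in\cX_C}\nu^{a(x)}$ is squeezed between $\nu^{\ac}$ and $|\cX_C|\,\nu^{\ac}$, whence $\log_\nu Z\to\ac$ and $\log_\nu\pi_C(s)\to 0$. Thus $\log_\nu$ of the numerator vanishes in the limit.

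The core of the proof is a two-sided estimate of $\mathrm{cap}(s,D)$ via the Dirichlet and Thomson variational principles, and this is where I expect the main difficulty. For the upper bound, consider the sublevel set $W:=\{x\in\cX_C:\D(s,x)\le \D(s,D)-1\}$. Since $\D(s,d)\ge\D(s,D)$ for every $d\in D$, we have $D\cap W=\varnothing$, so $\mathds{1}_W$ is an admissible test function. The key structural claim, which follows from the ultrametric inequality (property~(3) of $\D$) together with the fact that $a(\cdot)$ changes by exactly $\pm1$ along each transition, is that every edge $\{w,v\}$ of the cut $\partial W$ runs from a state $w$ at deficit $\D(s,D)-1$ down to a state $v$ at deficit $\D(s,D)$; in particular no ``uphill'' edges of larger conductance leave $W$. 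Every cut edge therefore has conductance exactly $Z^{-1}\nu^{\ac-\D(s,D)+1}$, and the Dirichlet principle gives $\mathrm{cap}(s,D)\le|\partial W|\,Z^{-1}\nu^{\ac-\D(s,D)+1}$. For the matching lower bound, I would route a unit flow along a single path $\o:s\to D$ attaining the min--max in~\eqref{eq:ch}; its dissipation $\sum_{e\in\o}c(e)^{-1}$ is dominated by the bottleneck edge of conductance $Z^{-1}\nu^{\ac-\D(s,D)+1}$, so the Thomson principle yields $\mathrm{cap}(s,D)\ge\big(\sum_{e\in\o}c(e)^{-1}\big)^{-1}$.

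Because both $|\partial W|$ and the length of $\o$ are bounded by the $\nu$-independent size of $\cX_C$, the polynomial prefactors disappear on the logarithmic scale and the two bounds coincide; using $\log_\nu Z\to\ac$ we obtain
\[
	\log_\nu\mathrm{cap}(s,D)\longrightarrow(\ac-\D(s,D)+1)-\ac=1-\D(s,D).
\]
Combining this with the estimate of the numerator gives $\limnu\log_\nu\E\tau_{s,D}(\nu)=0-(1-\D(s,D))=\D(s,D)-1$. The delicate point throughout is the cut analysis of the previous paragraph: one must use the communication height precisely, rather than a crude global level-set argument, to guarantee that the separating surface $\partial W$ is localized at deficit exactly $\D(s,D)$ and contains only critical-conductance edges, which is what makes the Dirichlet and Thomson bounds agree to leading exponential order.
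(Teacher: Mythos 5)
Your proof is correct, and at the top level it follows the same potential-theoretic route as the paper: your exact identity $\E\tau_{s,D}(\nu)=\mathrm{cap}(s,D)^{-1}\sum_{x\in\cX_C}\pi_C(x)\,h_{s,D}(x)$ is identity~\eqref{eq:txa} in disguise (effective resistance being the reciprocal of capacity, and $\mathbb P(T_{z,s}<T_{z,D})=h_{s,D}(z)$ the equilibrium potential), and in both arguments the sum is shown to contribute only $\nu^{o(1)}$ while the capacity carries the exponent. You diverge in two genuine respects. First, you stay in continuous time, so the conductances $c(x,y)=Z^{-1}\nu^{\max\{a(x),a(y)\}}$ absorb the activation rate and the $-1$ in the exponent falls directly out of the capacity; the paper instead uniformizes at rate $CN\nu$, obtains exponent $\D(s,D)$ for the discrete chain (whose conductances scale as $\nu^{\min\{a(x),a(y)\}}$), and restores the $-1$ via~\eqref{eq:contdisc}. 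Second, and more substantively, where the paper invokes \cite[Proposition 2.2]{Hollander2017a} --- the comparison~\eqref{eq:criticaleffective} between effective and critical resistance --- and then only computes the critical resistance, you prove the two-sided capacity estimate from scratch: Thomson's principle with a unit flow along a single min--max path for the lower bound, and Dirichlet's principle with the sublevel set $W=\{x:\D(s,x)\le\D(s,D)-1\}$ for the upper bound. Your cut analysis is sound: $x\in W$ forces $a(x)\ge\ac-\D(s,D)+1$ (since $\D(s,x)\ge\ac-a(x)$), and appending a neighbor $y\notin W$ to an optimal path into $x$ gives $\D(s,D)\le\D(s,y)\le\max\{\D(s,x),\,\ac-a(y)\}$, which pins $a(y)=\ac-\D(s,D)$ and $a(x)=\ac-\D(s,D)+1$, so every cut edge is indeed a deactivation step of exactly critical conductance. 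Notably, this is the same sublevel-set device the paper deploys later for the mixing-time lower bound in Subsection~\ref{sub:proofmix}; your version thus makes Theorem~\ref{thm:hit} self-contained and structurally parallel to Theorem~\ref{thm:mix}, at the price of re-deriving a lemma the paper simply cites.
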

The proof of this result is presented in Subsection~\ref{sub51}. We now illustrate how Theorem~\ref{thm:hit} allows us to identify the longest timescale for which each node of the network starves. For each node $i \in V$ let $\cS_C(i):=\{ s \in \cD : s_i \neq 0 \}$ be the subset of dominant states in which node $i$ is active. It is natural to study the temporal starvation only for the nodes which are active in at least one dominant states, \ie $\cS_C(i) \neq \emptyset$, and that are not active in every dominant states, \ie $\cS_C(i) \neq \cD$. For every such node $i$ we define its \textit{starvation index} as
\begin{equation}
\label{eq:siv}
	\siv:= \max_{s \in \cD \setminus \cS_C(i)} \min_{s' \in \cS_C(i)} \D(s,s').
\end{equation}
From property (1) of the communication heights it immediately follows that $\siv \geq 1$. We choose not to define the starvation index of the nodes $i\in V$ such that $\cS_C(i) = \emptyset$, since these nodes do not properly suffer from \textit{temporal} starvation, since they permanently starve in the limit $\ninf$ having asymptotically zero throughput. Similarly, $\siv$ is not defined for any node $i$ that is active in every dominant state of the network, \ie $\cS_C(i) = \cD$, since any such node $i$ does not starve in view of the fact that $\sum_{x \in \cX_C} \pi_C(x) \ind{x_i \neq 0} \to 1$ as $\ninf$.

The starvation index $\siv$ captures the largest timescale at which node $i$ suffers from temporal starvation at stationarity in the high-activation regime $\ninf$. Indeed, $\siv$ characterizes on a logarithmic scale how long it takes the activity process to reach the subset $\cS_C(i)$ of dominant states where node $i$ is active when starting in the worst possible dominant state in $\cD \setminus \cS_C(i)$, as illustrated by the following identity
\[
	\limnu \log_\nu \Big ( \max_{s \in \cD \setminus \cS_C(i)} \E \tau_{s,\cS_C(i)}(\nu) \Big ) = \siv -1.
\]
Lastly, we define the \textit{starvation index of the network} as the worst starvation index of its nodes, \ie
\begin{equation}
\label{eq:si}
	\si := \max_{i \in V : \cS_C(i) \neq \emptyset, \cD} \siv.
\end{equation}

The index $\si$ is a non-increasing function of the number of available channels for all networks of which we analyzed numerically the corresponding state spaces $\cX_1,\dots,\cX_{\chi(G)-1}$. However, proving this claim analytically is challenging in view of the intricate nature of the dominant states. Indeed, dominant states in $\mathcal{D}_{\nc+1}$ are not necessarily obtained from dominant states in $\mathcal{D}_{\nc}$ by activating a maximum number of inactive nodes on the $C+1$-th channel, as illustrated for the network in Figure~\ref{fig:difficult}. Hence, there is no simple relation between the sets $\cD$ and $\mathcal{D}_{\nc+1}$ nor any other ``monotonicity property'' that one could leverage. In particular, this means that the maximum in~\eqref{eq:si} may have to possibly be taken over different and/or non-nested subsets of nodes when increasing $\nc$.
\begin{figure}[!h]
\centering
	\includegraphics[scale=1.1]{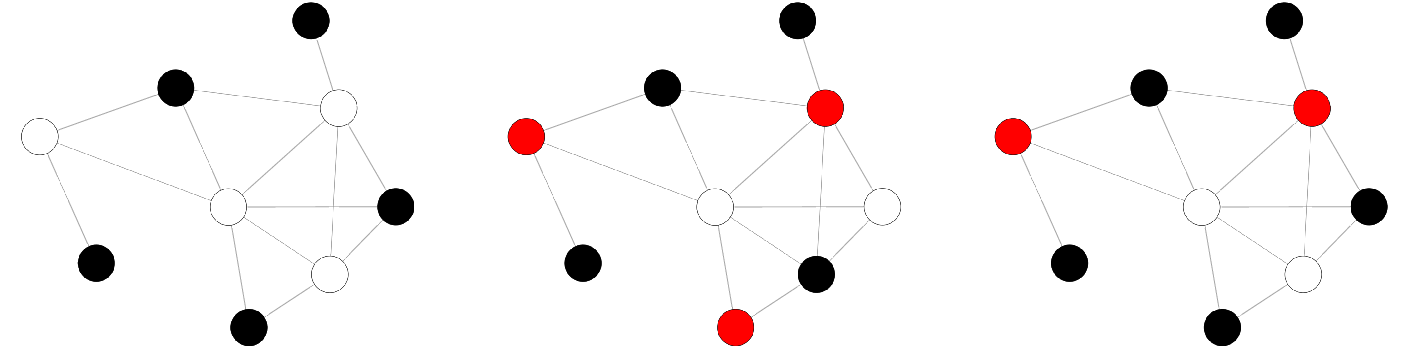}
	\caption{Example of a network with the unique dominant state $s^{(1)} \in \mathcal{D}_1$ (on the left) and two dominant states $s^{(2)}, s^{(3)} \in \mathcal{D}_2$ (in the middle and on the right). In particular, the dominant state $s^{(2)}$ is not superset of the dominant state $s^{(1)}$.}
	\label{fig:difficult}
\end{figure}


\subsection{Asymptotics for transition times between dominant states}
\label{sub51}
This subsection is devoted to the proof of Theorem~\ref{thm:hit}. We first introduce the uniformized discrete-time version of the activity process, which will allow us to use classical results for expected hitting times of reversible Markov chains and their analogy with electrical networks, see \eg \cite[Section 2.1]{Hollander2017a} and references therein. 

In this subsection we will almost entirely suppress the subscript $C$ for notational compactness, but the reader should keep in mind that the results hold for a general multi-channel CSMA dynamics as described in Subsection~\ref{sub22}.
We consider the uniformized discrete-time version of the activity process $\{X_C(t)\}_{t\geq 0}$. In more detail, we construct a discrete-time Markov chain $\{\widetilde{X}(t)\}_{t \in \N}$ starting from the continuous-time Markov process $\{X_C(t)\}_{t\geq 0}$ by means of uniformization at rate $q_{\mathrm{max}}:=\max_{x \in \cX_C} \sum_{y \neq x} q(x,y) = C N \nu$.
The transition probabilities of this Markov chain read
\[
	P(x,y) =
	\begin{cases}
		\frac{1}{C N} \nu^{-[a(x)-a(y)]^+} & \text{ if } d(x,y)=1,\\
		0 & \text{ if } d(x,y)>1,\\
		1 - \sum_{z \in \cX, z\neq x} P(x,z) & \text{ if } x=y,
	\end{cases}
\]
where $d(x,y)$ is the distance function on $\cX \times \cX$ defined as
\[
	d(x,y):=|\{ i \in V : x_i \neq y_i\}|+|\{ i \in V : x_i \neq y_i \text{ and } x_i y_i \neq 0 \}|.
\]
The quantity $d(x,y)$ represents the minimum number of steps that the CSMA dynamics needs to go from state $x$ to state $y$; the second term on the right-hand side accounts for the fact that if a node is active on a channel $c$, the immediate activation on another frequency $c'\neq c$ is not allowed and requires first the node to become inactive. 

This distance function on $\cX$ reflects the main difference between the multi-channel CSMA dynamics and the classical multi-color Glauber dynamics, in which a node can change channel/color in a single step (see~\cite[Section IV.C]{Lam2012} for more details). The two corresponding state spaces, while still having the same states, have thus fundamentally different structures and, in particular, there are many more possible trajectories between dominant states in the one corresponding to the Glauber dynamics. This fact means that using Glauber dynamics to study temporal starvation phenomena may lead to overly optimistic performance bounds.

It is easy to check that~\eqref{eq:statdist_multichannel_homogeneous} is the stationary distribution also for the uniformized chain $\{\widetilde{X}(t)\}_{t \in \N}$. Let $T_{x,A}(\nu):=\inf\{t \in \N : \widetilde{X}(t) \in A \}$ be the discrete-time counterpart of the first hitting time $\tau_{x,A}(\nu)$. These hitting times are closely related as
\[
		\tau_{x,A}(\nu) \stackrel{d}{=} \sum_{i=1}^{T_{x,A}(\nu)} \mathcal E_i(\nu),
\]
where $\{ \mathcal E_i(\nu) \}_{i\in \N}$ are i.i.d.~exponential random variables with mean $q_{\mathrm{max}}^{-1}=( C N \nu)^{-1}$. In particular, it holds that
\begin{equation}
\label{eq:contdisc}
	\E \tau_{x,A}(\nu) = (C N \nu)^{-1} \, \E T_{x,A}(\nu).
\end{equation}
This equality shows that we can study the expected transition times between dominant states in the discrete-time setting where the theory is richer and then immediately translate them back for the continuous-time activity process $\{X_C(t)\}_{t\geq 0}$. We are now ready to present the proof of the main result of this section.

\begin{proof}[Proof of Theorem~\ref{thm:hit}]
We first introduce the following notation to asymptotically compare two functions $f,g: \R \to \R$: we write $f(\nu) \prec g(\nu)$ if $f(\nu) = o(g(\nu))$, $f(\nu) \preceq g(\nu)$ if $f(\nu) = O(g(\nu))$ as $\ninf$ and $f(\nu) \asymp g(\nu)$ if both $f(\nu) \preceq g(\nu)$ and $g(\nu) \preceq f(\nu)$.

Furthermore, we will need some classical notions from the theory of reversible Markov chains on finite state spaces. For every pair of states $x,y \in \cX$ for which $P(x,y) \neq 0$, we define the \textit{resistance} of the edge $e=(x,y)$ as $r(e)=r(x,y)=(\pi(x) \, P(x,y))^{-1}$. For any state $x \in \cX$ and subset $A \subset \cX \setminus \{x\}$, define then the \textit{effective resistance} $R(x \leftrightarrow A):= \pi(x)^{-1} \, \mathbb P (T_{x,A} < T_{x,x}^+)$ and the \textit{critical resistance} $\Psi(x,A):=\min_{\o: x \to A} \max_{ e \in \o} r(e)$. As shown~\cite[Proposition 2.2]{Hollander2017a}, effective resistance and critical resistance of the same pair of states are intimately related, namely there exists a constant $k \geq 1 $ independent of $\nu$ such that
\begin{equation}
\label{eq:criticaleffective}
	\frac{1}{k} \Psi(x,A) \leq R(x \leftrightarrow A) \leq k \, \Psi(x,A).
\end{equation}
As illustrated in~\cite[Section 2.1]{Hollander2017a}, the following identity holds for the expected hitting time of the target subset $A$ starting from a state $x \not\in A$
\begin{equation}
\label{eq:txa}
	\E T_{x,A} = \pi(x) \, R(x \leftrightarrow A) \sum_{z \in \cX} \frac{\pi(z)}{\pi(x)} \, \mathbb P (T_{z,x} < T_{z,A}),
\end{equation}
Consider now the special case relevant for this theorem in which the starting state is a dominant state, \ie $x=s \in \cD$, and the target subset $D$ is such that $D \subseteq \cD \setminus \{x\}$. From~\eqref{eq:statdist_multichannel_homogeneous} it immediately follows that
\[
	\pi(s) \succ \pi(z) \quad \forall \, z \in \cX \setminus \cD \, \text{ and } \, \pi(s) \succeq \pi(z) \quad \forall \, z \in \cD.
\]
Using this fact and the identity~\eqref{eq:txa} we deduce that $\E T_{s,D} \asymp \pi(s) \, R(s \leftrightarrow D)$ as $\ninf$. In view of~\eqref{eq:criticaleffective}, it then follows that
\[
	\E T_{s,D} \asymp \pi(s) \, \Psi(s,D), \quad \text{as } \ninf,
\]
and thus
\begin{equation}
\label{eq:ETPsi}
	\limnu \log_\nu \E T_{s,D} = \limnu \log_\nu \pi(s) \, \Psi(s,D).
\end{equation}
Consider the right-hand side of the latter identity. Using the definition of critical resistance, it follows that
\begin{align}
\log_\nu \pi(s)  \Psi(s,D) &= \log_\nu \min_{\o: s \to D} \max_{e \in \o} \pi(s) r(e)\nonumber\\
	&= \log_\nu \min_{\o: s \to D} \max_{(x,y) \in \o} \frac{\pi(s)}{\pi(x) \, P(x,y)}\nonumber\\
	&= \min_{\o: s \to D} \max_{(x,y) \in \o} \log_\nu \frac{C N \nu^{\ac}}{\nu^{a(x)} \nu^{-[a(x)-a(y)]^+}}\nonumber\\
	& = \log_\nu C N + \min_{\o: s \to D} \max_{(x,y) \in \o} \ac-a(x)+[a(x)-a(y)]^+\nonumber\\
	& = \log_\nu C N + \min_{\o: s \to D} \max_{(x,y) \in \o} \ac - ( a(x) \wedge a(y) )\nonumber\\
	& = \log_\nu C N + \min_{\o: s \to D} \max_{x \in \o} \ac-a(x)\nonumber\\
	& = \log_\nu C N + \D(s,D), \label{eq:chainid}
\end{align}
where the third last passage above follows from the identity $-a(x)+[a(x)-a(y)]^+ = - ( a(x) \wedge a(y) )$, while the second last one is a consequence of the fact that we take the maximum over \textit{all the configurations visited by the path} $\o$, rather than the edges crossed by the same.

Taking the limit in~\eqref{eq:chainid} we obtain $\limnu \log_\nu \pi(s)  \Psi(s,D) = \D(s,D)$, which, combined with identity~\eqref{eq:ETPsi}, yields
\[
	\limnu \log_\nu \E T_{s,D}(\nu) = \D(s,D).
\]
In view of~\eqref{eq:contdisc}, we have
\begin{align*}
	\limnu \log_\nu \E \tau_{s,D}(\nu)
	= \limnu \log_\nu (CN\nu)^{-1} \E T_{s,D}(\nu)
	= \limnu \log_\nu \E T_{s,D}(\nu) +  \limnu \log_\nu (CN\nu)^{-1} 
	= \D(s,D) -1,
\end{align*}
and this concludes the proof.\end{proof}


\section{Mixing time asymptotics for multi-channel CSMA networks}
\label{sec6}
In this section we turn attention to the long-run behavior of the Markov process $\{X_C(t)\}_{t \geq 0}$ and in particular examine the rate of convergence to the stationary distribution. We measure the rate of convergence in terms of the total variation distance and the so-called \textit{mixing time}, which describes the time required for the distance to stationarity to become small.

The mixing time becomes particularly relevant when the network has two or more dominant states which together attract the entire probability mass in the limit as $\ninf$. Indeed, in this case, the mixing time provides an indication of how long it takes the activity process to reach a certain level of fairness among the dominant components. 

For any fixed activation rate $\nu$, the maximal distance over $x\in\cX_C$, measured in terms of total variation, between the distribution at time~$t$ of the activity process and the stationary distribution $\pi_C$ is defined as
\[
	d_t(\nu):= \max_{x\in\cX_C} \| P^t_\nu (x, \cdot) -\pi_C\|_{\mathrm{TV}},
\]
where $P^t_\nu(x,\cdot)$ is the distribution at time $t$ of the Markov process $\{X_C(t)\}_{t \geq 0}$ started at time $0$ in $x$. The \textit{mixing time} of the process $\{X_C(t)\}_{t \geq 0}$ is then defined as
\[
	\tm(\e,\nu):=\inf\{t \geq 0 : d_t(\nu) \leq \e\}.
\]

The next result shows that the worst communication height between any two dominant states provides a lower bound for the timescale at which the activity process mixes.

\begin{theorem}[Mixing time timescale]\label{thm:mix}
The mixing time of the multi-channel activity process $\{X_C(t)\}_{t \geq 0}$ satisfies
\begin{equation}
\label{eq:mixbound}
	\limnu \log_\nu \tm(\e,\nu) \geq \GC -1,
\end{equation}
where $\GC \geq 1$ is the worst communication height between a pair of dominant states, \ie $\GC:=\max_{s,s' \in \cD} \Delta(s,s')$.
\end{theorem}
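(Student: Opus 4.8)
The plan is to establish the lower bound~\eqref{eq:mixbound} by exhibiting a \emph{bottleneck} in the state space $\cX_C$ whose conductance decays like $\nu^{1-\GC}$, and then invoking the standard conductance lower bound on the mixing time. Let $s,s'\in\cD$ be a pair of dominant states realizing the worst communication height, i.e.\ $\D(s,s')=\GC$. Exploiting that $\D(\cdot,\cdot)$ is an ultrametric, I would separate $s$ from $s'$ by the ball
\[
 S:=\{x\in\cX_C : \D(s,x)<\GC\},
\]
so that $s\in S$ while $s'\in S^c$. Since all states in $\cD$ carry equal stationary mass $Z^{-1}\nu^{\ac}$ and $\sum_{x\in\cD}\pi_C(x)\to1$, both $S$ and $S^c$ contain a dominant state, and therefore $\pi_C(S)\asymp\pi_C(S^c)\asymp1$ as $\ninf$ (using the asymptotic notation of Subsection~\ref{sub51}).

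The heart of the argument is a structural bound on the edges crossing the cut. Let $x\in S$ and $y\in S^c$ be adjacent, so that $q(x,y)>0$. From $\D(s,x)\le\GC-1$, $\D(s,y)\ge\GC$ and the ultrametric inequality $\D(s,y)\le\max\{\D(s,x),\D(x,y)\}$ one is forced to have $\D(x,y)\ge\GC$; on the other hand the single-step path $x\to y$ shows $\D(x,y)\le\ac-(a(x)\wedge a(y))$ by~\eqref{eq:ch}. Hence every crossing edge satisfies $a(x)\wedge a(y)\le\ac-\GC$, i.e.\ it passes through a configuration with at least $\GC$ fewer active nodes than a dominant state. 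A direct computation using~\eqref{eq:statdist_multichannel_homogeneous} gives $\pi_C(x)\,q(x,y)=Z^{-1}\nu^{(a(x)\wedge a(y))+1}$, and since there are at most $|\cX_C|^2$ crossing edges and $Z\ge\nu^{\ac}$, the stationary flow out of $S$ obeys
\[
 Q(S,S^c):=\sum_{x\in S,\,y\in S^c} \pi_C(x)\,q(x,y)\preceq \nu^{1-\GC}.
\]
Combining this with $\pi_C(S)\asymp\pi_C(S^c)\asymp1$, the bottleneck ratio $\Phi_*:=\min_{\pi_C(B)\le1/2} Q(B,B^c)/\pi_C(B)$ satisfies $\Phi_*\preceq\nu^{1-\GC}$.

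Finally I would convert this bottleneck into a mixing lower bound. The Cheeger inequality for reversible chains bounds the spectral gap by $\lambda\le2\Phi_*$, and the relaxation-time (spectral-gap) lower bound on mixing gives $\tm(\e,\nu)\succeq\lambda^{-1}$; chaining these yields $\tm(\e,\nu)\succeq\Phi_*^{-1}\succeq\nu^{\GC-1}$, and taking $\limnu\log_\nu$ produces exactly~\eqref{eq:mixbound}, since all $\nu$-independent factors (the number of crossing edges, the Cheeger constant, and the $\e$-dependent factor) vanish on the logarithmic scale.

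The main obstacle is the structural crossing-edge estimate: the bound hinges on the ultrametric property of $\D(\cdot,\cdot)$ to guarantee that leaving the ball $S$ necessarily forces a descent of at least $\GC$ active nodes, and on the exact continuous-time edge weight $Z^{-1}\nu^{(a(x)\wedge a(y))+1}$. The extra factor $\nu$ in this weight---equivalently the time rescaling $q_{\mathrm{max}}=CN\nu$ already encountered in~\eqref{eq:contdisc}---is precisely what produces the $-1$ in the exponent, so I would take care that the constant bookkeeping lands on $\GC-1$ rather than $\GC$.
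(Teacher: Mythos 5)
Your proposal is correct and follows essentially the same route as the paper: the same bottleneck set $S=\{x\in\cX_C:\Delta(s,x)<\GC\}$ separating a worst pair $s,s'$, the same flow estimate $Q(S,S^c)\preceq \nu^{1-\GC}$ with $\pi_C(S),\pi_C(S^c)$ bounded away from zero, and a conductance lower bound on $\tm(\e,\nu)$. The only cosmetic differences are that you derive the crossing-edge property from ultrametricity, obtaining the one-sided inequality $a(x)\wedge a(y)\le \ac-\GC$ (the paper argues directly that boundary states of $S$ have exactly $\ac-\GC+1$ active nodes, but your inequality is all that the upper bound on $Q(S,S^c)$ requires), and that you pass through Cheeger's inequality and the relaxation-time lower bound rather than the direct conductance bound on the mixing time (the paper's Lemma~\ref{lem:cond}), which is equivalent on the $\log_\nu$ scale.
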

The proof of this theorem is based on a conductance argument that leverages the bottlenecks of the state space $\cX_C$ and is presented later in Subsection~\ref{sub:proofmix}.

The first consideration is that for some networks the mixing time worsens when the number of available channels increases. Consider the network depicted in Figures~\ref{fig:mixingincreases} and~\ref{fig:strict}, which illustrates its dominant states for $C=1$ and $C=2$, respectively. For such a network it is easy to check that~\eqref{eq:mixbound} holds with equality when $C=1$ and to compute that $\G(1)=2$, while $\G(2)=3$, see Table~\ref{tab:tabmix}.

\begin{figure}[!h]
\centering
\includegraphics[scale=1.15]{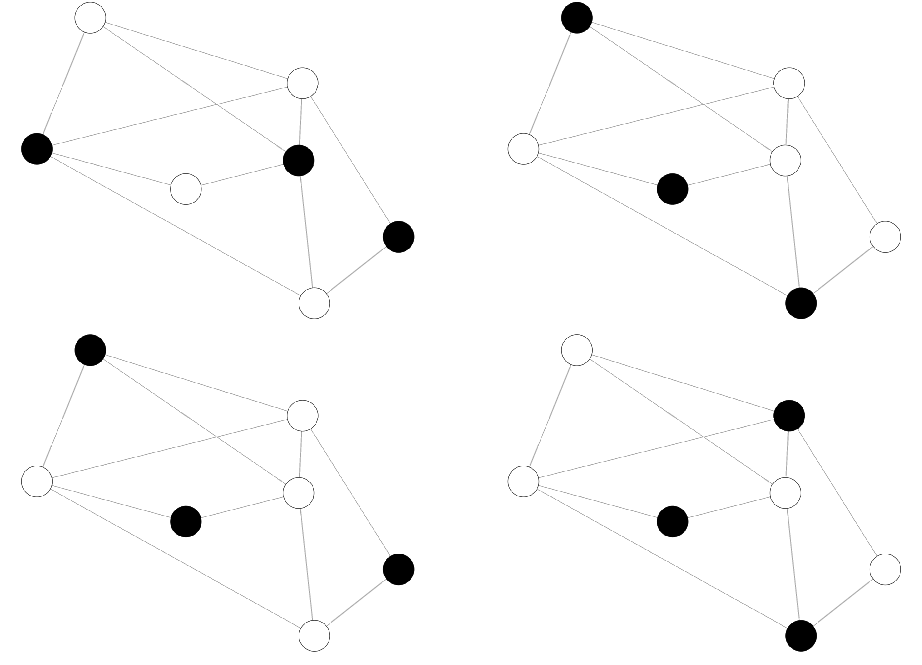}
\caption{The four dominant states $\mathcal{D}_1=\{s^{(1)},s^{(2)},s^{(3)},s^{(4)}\}$ on a network with $C=1$ channels available.}
\label{fig:mixingincreases}
\end{figure}

\begin{figure}[!h]
\centering
\includegraphics[scale=1.15]{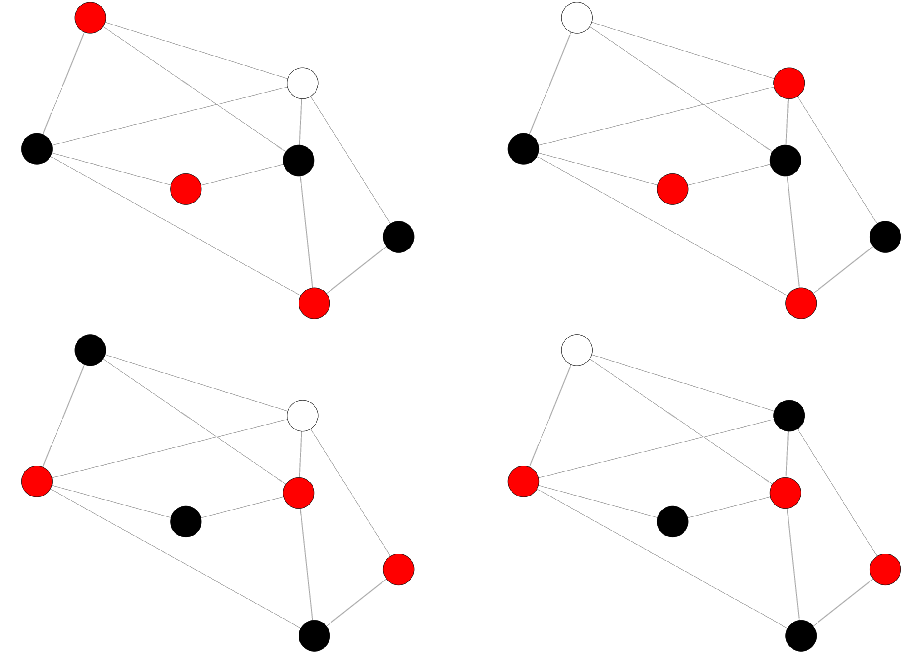}
\caption{The four dominant states $\mathcal{D}_2=\{s^{(5)},s^{(6)},s^{(7)},s^{(8)}\}$ on a network with $C=2$ channels available.}
\label{fig:strict}
\end{figure}

\begin{table}[!h]
\centering
\begin{tabular}{c|cccc}
& $s^{(1)}$ & $s^{(2)}$ & $s^{(3)}$ & $s^{(4)}$\\
\hline
$s^{(1)}$ & $0$ & $2$ & $2$ & $2$\\
$s^{(2)}$ & $2$ & $0$ & $1$ & $1$ \\
$s^{(3)}$ & $2$ & $1$& $0$ & $1$\\
$s^{(4)}$ & $2$ & $1$ & $1$ & $0$\\
\end{tabular}
\quad 
\begin{tabular}{c|cccc}
& $s^{(5)}$ & $s^{(6)}$ & $s^{(7)}$ & $s^{(8)}$\\
\hline
$s^{(5)}$ & $0$ & $1$ & $3$ & $3$\\
$s^{(6)}$ & $1$ & $0$ & $3$ & $3$ \\
$s^{(7)}$ & $3$ & $3$& $0$ & $1$\\
$s^{(8)}$ & $3$ & $3$ & $1$ & $0$\\
\end{tabular}
\vspace{0.5cm}
\caption{The values of the communication height $\Delta(\cdot,\cdot)$ between all pairs of dominant states in $\mathcal{D}_1$ (on the left, cf.~Figure~\ref{fig:mixingincreases}) and in $\mathcal{D}_2$ (on the right, cf.~Figure~\ref{fig:strict}).}
\label{tab:tabmix}
\end{table}
\FloatBarrier

We now compare this new exponent $\GC$ characterizing the mixing time in the regime $\ninf$ with the starvation index $\si$ introduced in Section~\ref{sec5}. The inequality $\siv \leq \GC$ holds for every node $i \in V$ with $\cS_C(i) \neq \emptyset$ and $\cS_C(i) \neq \cD$, since
\[
	\siv = \max_{s \in \cD \setminus \cS_C(i)} \min_{s' \in \cS_C(i)} \D(s,s') \leq \max_{s,s' \in \cD} \D(s,s') = \GC,
\]
and trivially holds also for all the other nodes for which we set $\siv=0$, as $\GC \geq 1$. Therefore, the starvation index of the network $\si$ is also not larger than $\GC$, \ie
\begin{equation}
\label{eq:indices}
	\si \leq \GC.
\end{equation}
These two indices can be equal, for instance in the case of a conflict graph with a grid structure and an even number of nodes. As illustrated in~\cite{ZBvLN13}, the CSMA dynamics with $C=1$ on this bipartite conflict graph has exactly two dominant states and each node is active in only one of the two, and therefore $\Upsilon(1)= \G(1)$.

However, for most conflict graphs and choices of the number $C$ of available channels inequality~\eqref{eq:indices} is strict. The network presented in Figure~\ref{fig:strict} is an example for which $\Upsilon(2) < \G(2)$. We already calculated earlier that $\G(2)=3$. As far as the starvation index is concerned, the only two nodes $v,w$ for which it is well defined are the two topmost ones of each dominant state in Figure~\ref{fig:strict} and using again Table~\ref{tab:tabmix} we can deduce that $\Upsilon_v(2)=\Upsilon_w(2)=1$ and thus $\Upsilon(2)=1$. 


The fact that inequality~\eqref{eq:indices} is often strict suggests that the mixing time may not be the best metric to study temporal starvation at least in the high-activation regime $\ninf$ since it overestimates the duration of the starvation effects. Indeed, while it may take a long time for the activity process $\{X_C(t)\}_{t \geq 0}$ to visit all the dominant states and thus ``mix'' on the state space $\cX_C$, the temporal starvation of the nodes often occurs at shorter timescales even in the worst case, that is the one captured by the network starvation index $\si$.

We notice that determining precisely for which class of conflict graphs the strict inequality $\si < \GC$ holds is probably a very difficult combinatorial problem, whose solution is beyond the scope of this paper.

\newpage
\subsection{Asymptotic lower bound for the mixing time}
\label{sub:proofmix}
This subsection is entirely devoted to the proof of Theorem~\ref{thm:mix}. The proof relies on a classical conductance argument, which is often used in the literature to obtain lower bounds for the mixing time of Markov chains with a finite state space, see~\cite[Theorem 7.3]{LPW09}.

For any subset $S \subset \cX_C$, the \textit{conductance of} $S$ is defined as
\[
	\Phi(S):=\frac{Q(S,S^c)}{\pi_C(S)},
\]
where $Q(S,S^c)$ is \textit{probability flow out of} $S$, \ie
\[
	Q(S,S^c):=\sum_{x \in S, y \in S^c} \pi_C(x) \, q(x,y).
\]
In order to bound the mixing time of the activity process $\{X_C(t)\}_{t\geq 0}$, we will make use of the continuous-time counterpart of that theorem, which is summarized in the following lemma, see~\cite[Lemma 7.1]{ZBvL15} for a proof. 
\begin{lemma}{} \label{lem:cond}
For any $\e \in (0,1/2)$,
\begin{equation}
\label{eq:mixgen}
	\tm(\e, \nu) \geq \frac{1/2 -\e}{ \min_{S \subset \cX_C \,: \, \pi_C(S) \leq 1/2} \Phi (S)}.
\end{equation}
\end{lemma}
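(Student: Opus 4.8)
The plan is to prove the inequality $\tm(\e,\nu) \geq (1/2-\e)/\Phi(S)$ for an arbitrary bottleneck set $S \subset \cX_C$ with $\pi_C(S) \leq 1/2$, and only at the very end take the infimum over such $S$, since the right-hand side of~\eqref{eq:mixgen} is precisely what one obtains by choosing the $S$ of minimal conductance. The whole argument is the continuous-time transcription of the classical discrete-time conductance bound~\cite[Theorem 7.3]{LPW09}. I would start the process not from a single state but from the stationary distribution conditioned on $S$, namely $\pi_S(x):=\pi_C(x)\,\ind{x\in S}/\pi_C(S)$, and monitor the \emph{escape mass} $g(t):=(\pi_S P^t_\nu)(S^c)$, the probability of having left $S$ by time $t$. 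The crucial preliminary observation is a domination: $\pi_S$ is bounded pointwise above by the scaled stationary measure $\pi_C/\pi_C(S)$, and because the semigroup $P^t_\nu$ has non-negative entries it preserves this pointwise order on measures while fixing $\pi_C/\pi_C(S)$ (as $\pi_C$ is stationary). Hence $(\pi_S P^t_\nu)(x)\leq \pi_C(x)/\pi_C(S)$ for every state $x$ and every $t\geq 0$.

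Next I would differentiate $g$ via the master equation. Splitting the sum over source states, the contribution of states in $S^c$ is non-positive --- it collects the backward flow into $S$ together with the negative diagonal generator entries --- so only the genuine outflow from $S$ into $S^c$ survives in an upper bound. Inserting the domination bound from the previous step then gives $g'(t)\leq \sum_{x\in S}\frac{\pi_C(x)}{\pi_C(S)}\sum_{y\in S^c}q(x,y)=Q(S,S^c)/\pi_C(S)=\Phi(S)$, and since $g(0)=0$ this integrates to the clean linear leakage estimate $g(t)\leq \Phi(S)\,t$ valid for all $t\geq 0$.

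Finally I would convert this into a total-variation lower bound by testing on the set $S$ itself: $\|\pi_S P^t_\nu-\pi_C\|_{\mathrm{TV}}\geq (\pi_S P^t_\nu)(S)-\pi_C(S)=(1-g(t))-\pi_C(S)\geq 1/2-\Phi(S)\,t$, using $\pi_C(S)\leq 1/2$. To pass from this fixed starting distribution to $d_t(\nu)$, which maximizes over \emph{deterministic} initial states, I would note that $\pi_S P^t_\nu$ is a convex combination of the rows $P^t_\nu(x,\cdot)$ with $x\in S$, so convexity of the total-variation norm yields $d_t(\nu)\geq \|\pi_S P^t_\nu-\pi_C\|_{\mathrm{TV}}\geq 1/2-\Phi(S)\,t$. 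Consequently $d_t(\nu)>\e$ for every $t<(1/2-\e)/\Phi(S)$, which forces $\tm(\e,\nu)\geq (1/2-\e)/\Phi(S)$; taking the infimum of the right-hand side over all admissible $S$ delivers~\eqref{eq:mixgen}.

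The main obstacle is the domination-and-differentiation core, steps one and two: one must verify that the pointwise bound $(\pi_S P^t_\nu)(x)\leq \pi_C(x)/\pi_C(S)$ persists for all $t$ and not merely at $t=0$, and then keep the sign bookkeeping honest when differentiating $g$, checking that every term coming from $S^c$ (the return flow and the diagonal) can only decrease the escape mass. Once the linear estimate $g(t)\leq\Phi(S)\,t$ is secured, the remaining total-variation and convexity manipulations are routine.
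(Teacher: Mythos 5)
Your proposal is correct, and it coincides with the argument the paper relies on: the paper proves nothing inline for Lemma~\ref{lem:cond}, deferring to \cite[Lemma 7.1]{ZBvL15} as the continuous-time counterpart of \cite[Theorem 7.3]{LPW09}, and your domination-plus-leakage computation (start from the conditioned measure $\pi_S$, use that the semigroup preserves the pointwise bound by $\pi_C(\cdot)/\pi_C(S)$ to get the escape-mass estimate $g(t)\leq \Phi(S)\,t$, then pass to $d_t(\nu)$ by convexity of total variation) is precisely the standard proof behind those references. Nothing needs fixing.
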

To get a sharp bound for the conductance and hence a sharp lower bound for the mixing time, we need to identify a subset $S$ with low conductance.
Let $s,s' \in \cD$ be two dominant states such that $\Delta(s,s')=\GC$ and consider the subset $S$ of network states that can be reached by the CSMA dynamics starting in $s$ while keeping no less that $\ac-\GC+1$ nodes active, \ie
\[
	S=\{ x \in \cX_C : \Delta(s,x) < \GC\}.
\]
Note that $s \in S$ and $s' \in S^c$, which implies that both subsets $S$ and $S^c$ have non-vanishing stationary probabilities containing at least one dominant set each. Without loss of generality, let us assume that $\pi_C(S) \leq 1/2$ for all $\nu$ sufficiently large.

By construction of $S$, all states $x \in S$ such that there exists $y \in S^c$ such that $d(x,y) =1$ must have exactly $\ac-\GC+1$ active nodes. Indeed, they clearly cannot have fewer active nodes, otherwise $\Delta(s,x) \geq \GC$, and if they had strictly more than $\ac-\GC+1$, it would be impossible to exit in a single step from $S$, since the CSMA dynamics prescribes the number of active nodes to increase or decrease at most by one. In view of this fact, the probability flow of $S$ can be rewritten as
\[
	Q(S,S^c) = \sum_{x \in S, y \in S^c} \pi_C(x) \, q(x,y) = Z^{-1} \nu^{\ac-\GC+1} B(S),
\]
where the quantity $B(S):=|\{ (x,y) \in S \times S^c  \,:\, d(x,y) =1\}|$ counts the number of possible way of exiting from $S$, which is independent of $\nu$ and depends only on the structure of the state space $\cX_C$. Then,
\begin{align*}
	\limnu \log_\nu Q(S,S^c)
	& = \limnu \log_\nu (Z^{-1} \nu^{\ac-\GC+1} B(S))\\
	& = \limnu \left (\log_\nu (Z^{-1} \nu^{\ac-\GC+1}) + \log_\nu B(S) \right ) \\
	& = \limnu \log_\nu (Z^{-1} \nu^{\ac-\GC+1})\\
	& = \ac-\GC+1 - \ac = 1-\GC.
\end{align*}
Using the conductance of the subset $S$ in~\eqref{eq:mixgen} yields the lower bound $\tm(\e, \nu) \geq (1/2 -\e) / \Phi (S)$. Therefore,
\begin{align*}
	\limnu \log_\nu \tm(\e, \nu) 
	\geq \limnu \log_\nu \frac{1/2 -\e}{\Phi (S)} 
	 = - \limnu \log_\nu \Phi (S)
	=  \limnu \log_\nu \pi (S) - \log_\nu Q(S,S^c)
	 = \GC-1,
\end{align*}
and this concludes the proof of Theorem~\ref{thm:mix}.

\section{Model generalizations}
\label{sec7}
In this section, we discuss some generalizations of our model and show to which extent our results remain valid.

Our framework for transition and mixing times does allow for more general heterogeneous activation rates of the form $\nu_{i,c} = f_{i,c}(\nu)$, where $f_{i,c}(\cdot)$ are non-negative real functions with a common scaling parameter $\nu$. These more general rates can be used to model interesting scenarios where nodes have preferred channels or frequency channels with different features. Allowing for this more general parametric family of activation rates does not affect the state space $\cX_C$, but does change the stationary distribution $\pi_C$ and, ultimately, the dominant states of the process. The asymptotic results derived in Sections~\ref{sec5} and~\ref{sec6} in the regime $\ninf$ still hold for this more general setting. However, it is not enough to keep track only of the number of active nodes and one needs to work with the more involved functions $\widetilde{a}(x):= \sum_{i=1}^N \sum_{c=1}^C w_{i,c} \mathds{1}_{\{x_i=c\}}$ and $\widetilde{\mathcal{A}}(C):=\max_{x \in \cX} \widetilde{a}(x)$, where the weights $w_{i,c}$ are defined as $w_{i,c}:=\limnu \log_\nu f_{i,c}(\nu)$ (assuming that such limits exist and are finite). 

Another natural generalization of our model is the one where each node $i \in V$ of the network possibly has more than one radio interface and thus can simultaneously transmit on $c_i$ channels, with $1 \leq c_i \leq C$, having an independent back-off timer for each one of them. In this setting there is no natural way to represent the multi-channel dynamics as a single-channel dynamics on a virtual network as we did in Section~\ref{sec3}, but the resulting Markov process still has a product-form stationary distribution, as proved in~\cite{PYLC10}. Despite the fact that this assumption results in a much larger and fundamentally different state space, the asymptotic results for transition and mixing times proved in Sections~\ref{sec5} and~\ref{sec6} remain valid, after having opportunely modified the definition of the functions $a(x)$ and $\mathcal{A}(C)$. However, the identification of the dominant states becomes more involved, especially in the case where a node cannot be active with different radio interfaces on the same frequency channel.

As far as the results of Section~\ref{sec4} are concerned, none of the statements of Theorem~\ref{thm:throughput} but the first one hold in general for the two aforementioned model extensions or in the case where the interference on different channels is described by different conflict graphs. Indeed its proof used in a crucial way the equivalent description of a network activity state as an (unweighted) partial $C$-colorings of the conflict graph, which is not valid anymore in the more general settings described above. 

The only exception is the scenario in which every node has exactly $C$ radio interfaces, each one dedicated to precisely one of the $C$ frequency channels. The dynamics on different channels become in this way decoupled and the network dynamics can be described by means of $C$-dimensional Markov process, each component being an independent single-channel CSMA Markov process. It is easy to show that in this case the aggregate throughput $\Theta(C)$ is constant in $C$, meaning that increasing the number $C$ of available channels may alleviate the starvation effects without negatively affecting the throughput.

\section{Conclusions}
\label{sec8}
In this paper we investigate the performance of random-access networks in which $\nc$ non-overlapping channels are available for transmission. We consider a Markovian model for the multi-channel network dynamics evolving according to a CSMA-like algorithm, aiming to understand how the network throughput performance and starvation issues depend on the number of available channels.

The most relevant scenario for our analysis is the one where the number $\nc$ of available channels is not sufficient to allow all nodes to be active simultaneously and for this reason temporal starvation phenomena persist. Focusing on the high-activation regime, we show how the activity states with a maximum number of active nodes play a crucial role in determining both the aggregate throughput and the temporal starvation effects in the network.

We then characterize the asymptotic behavior of transition times between dominant states by leveraging specific structural properties of the state space where the activity process evolves. This analysis allows us to infer the precise timescales at which starvation phenomena occur in the network, paving the way for a detailed analysis of the delay performance of multi-channel CSMA networks.

The same analytical framework that we develop for expected hitting times also yields an asymptotic lower bound for the mixing time of the process, which we show tends to overestimate the magnitude of the temporal starvation phenomena.

%
%
\newpage
\bibliographystyle{abbrv}

\begin{thebibliography}{10}

\bibitem{AZ13}
M.~Andrews and L.~Zhang.
\newblock {Utility optimization in heterogeneous networks via CSMA-based
  algorithms}.
\newblock In {\em 2013 WiOpt Proceedings}, pages 372--379, 2013.

\bibitem{Andrews2017}
M.~Andrews and L.~Zhang.
\newblock {Utility optimization in heterogeneous networks via CSMA-based
  algorithms}.
\newblock {\em Wireless Networks}, 23(1):219--232, 2017.

\bibitem{B89}
C.~Berge.
\newblock {Minimax relations for the partial q-colorings of a graph}.
\newblock {\em Discrete Mathematics}, 74(1-2):3--14, 1989.

\bibitem{Block2016}
R.~Block and B.~{Van Houdt}.
\newblock {Spatial fairness in multi-channel CSMA line networks}.
\newblock {\em Performance Evaluation}, 103:69--85, sep 2016.

\bibitem{BF12}
T.~Bonald and M.~Feuillet.
\newblock {Performance of CSMA in multi-channel wireless networks}.
\newblock {\em Queueing Systems}, 72(1-2):139--160, 2012.

\bibitem{BKMS87}
R.~Boorstyn, A.~Kershenbaum, B.~Maglaris, and V.~Sahin.
\newblock {Throughput analysis in multihop CSMA packet radio networks}.
\newblock {\em IEEE Transactions on Communications}, 35(3):267--274, 1987.

\bibitem{C89}
K.~Cameron.
\newblock {A min-max relation for the partial {\$}q{\$}-colourings of a graph.
  Part II: Box perfection}.
\newblock {\em Discrete Mathematics}, 74(1-2):15--27, 1989.

\bibitem{Hollander2017a}
F.~den Hollander, F.~Nardi, and S.~Taati.
\newblock {Metastability of hard-core dynamics on bipartite graphs}.
\newblock {\em Preprint at arXiv:1710.10232}, 2017.

\bibitem{Jain1984}
R.~Jain, D.-M. Chiu, and W.~Hawe.
\newblock {\em {A quantitative measure of fairness and discrimination for
  resource allocation in shared computer system}}.
\newblock Eastern Research Laboratory, Digital Equipment Corporation, Hudson,
  MA, 1984.

\bibitem{KL11}
C.~Kai and S.-C. Liew.
\newblock {Temporal Starvation in CSMA Wireless Networks}.
\newblock In {\em 2011 IEEE International Conference on Communications (ICC)},
  pages 1--6. IEEE, 2011.

\bibitem{Kai2015}
C.~Kai and S.-C. Liew.
\newblock {Temporal Starvation in CSMA Wireless Networks}.
\newblock {\em IEEE Transactions on Mobile Computing}, 14(7):1515--1529, 2015.

\bibitem{Lam2012}
K.-K. Lam, C.-K. Chau, M.~Chen, and S.-C. Liew.
\newblock {Mixing time and temporal starvation of general CSMA networks with
  multiple frequency agility}.
\newblock Technical report, 2011.

\bibitem{LCCL12}
K.-K. Lam, C.-K. Chau, M.~Chen, and S.-C. Liew.
\newblock {Mixing time and temporal starvation of general CSMA networks with
  multiple frequency agility}.
\newblock In {\em 2012 IEEE International Symposium on Information Theory
  Proceedings}, pages 2676--2680. IEEE, jul 2012.

\bibitem{LPW09}
D.~Levin, Y.~Peres, and E.~Wilmer.
\newblock {\em {Markov Chains and Mixing Times}}.
\newblock AMS, Providence, Rhode Island, 2009.

\bibitem{LZCC10}
S.-C. Liew, J.~Zhang, C.-K. Chau, and M.~Chen.
\newblock {Analysis of frequency-agile CSMA wireless networks}.
\newblock {\em Preprint at arXiv:1007.5255}, 2010.

\bibitem{PYLC10}
A.~Prouti{\`{e}}re, Y.~Yi, T.~Lan, and M.~Chiang.
\newblock {Resource Allocation over Network Dynamics without Timescale
  Separation}.
\newblock In {\em INFOCOM, 2010 Proceedings}, pages 1--5. IEEE, mar 2010.

\bibitem{vdVBvLP10}
P.~van~de Ven, S.~Borst, J.~van Leeuwaarden, and A.~Prouti{\`{e}}re.
\newblock {Insensitivity and stability of random-access networks}.
\newblock {\em Performance Evaluation}, 67(11):1230--1242, 2010.

\bibitem{Yun2012}
S.-Y. Yun, Y.~Yi, J.~Shin, and D.~Eun.
\newblock {Optimal CSMA: A survey}.
\newblock {\em 2012 IEEE International Conference on Communication Systems,
  ICCS 2012}, pages 199--204, 2012.

\bibitem{ZBvL15}
A.~Zocca, S.~Borst, and J.~van Leeuwaarden.
\newblock {Slow transitions and starvation in dense random-access networks}.
\newblock {\em Stochastic Models}, 31(3):361--402, 2015.

\bibitem{ZBvLN13}
A.~Zocca, S.~Borst, J.~van Leeuwaarden, and F.~Nardi.
\newblock {Delay performance in random-access grid networks}.
\newblock {\em Performance Evaluation}, 70(10):900--915, 2013.

\end{thebibliography}

\end{document}